\documentclass[11pt,reqno]{amsart}

\usepackage[T1]{fontenc}
\usepackage{lmodern}
\usepackage{amsmath,amssymb,mathtools,mathrsfs}
\usepackage{microtype}
\usepackage[colorlinks=true,linkcolor=blue,citecolor=blue,urlcolor=blue]{hyperref}

\numberwithin{equation}{section}
\allowdisplaybreaks

\newtheorem{theorem}{Theorem}[section]
\newtheorem{proposition}[theorem]{Proposition}
\newtheorem{lemma}[theorem]{Lemma}
\newtheorem{corollary}[theorem]{Corollary}
\theoremstyle{definition}
\newtheorem{conjecture}[theorem]{Conjecture}

\theoremstyle{remark}

\newcommand{\C}{\mathbb C}
\newcommand{\R}{\mathbb R}
\newcommand{\Ric}{\operatorname{Ric}}
\newcommand{\tr}{\operatorname{tr}}

\newcommand{\Euc}{\mathrm{Euc}}

\hypersetup{
  pdftitle={Compact Locally Conformal K\"ahler Manifolds with Constant Chern Holomorphic Sectional Curvature},
  pdfauthor={Zhuzhu Huang and Xueyuan Wan},
  pdfsubject={Compact Locally Conformal K\"ahler Manifolds with Constant Chern Holomorphic Sectional Curvature},
  pdfkeywords={Chern holomorphic sectional curvature, locally conformal Kahler, Bochner-Kahler, complex space form}
}

\title[Compact locally conformal K\"ahler Manifolds ]
{Compact Locally Conformal K\"ahler Manifolds with Constant Chern Holomorphic Sectional Curvature}

\author[Z. Huang]{Zhuzhu Huang}
\address{School of Mathematical Sciences, Chongqing University of Technology, Chongqing 400054, China}
\email{jojo@stu.cqut.edu.cn}

\author[X. Wan]{Xueyuan Wan}
\address{Mathematical Science Research Center, Chongqing University of Technology, Chongqing 400054, China}
\email{xwan@cqut.edu.cn}

\date{}
\makeatletter
\@namedef{subjclassname@2020}{\textup{2020} Mathematics Subject Classification}
\begin{document}

\begin{abstract}
We prove the Chern version of the constant holomorphic sectional curvature
conjecture for compact locally conformal K\"ahler manifolds. More precisely,
let \((M^n,h)\), \(n\geq2\), be a compact locally conformal K\"ahler
manifold whose Chern holomorphic sectional curvature is a constant \(c\). We
show that \(h\) is necessarily K\"ahler and therefore is a complex space form
metric of holomorphic sectional curvature \(c\). In particular, when \(c=0\),
the metric is K\"ahler flat. This removes the nonpositivity assumption from a
theorem of Chen, Chen, and Nie. The proof derives a curvature identity on the
universal K\"ahler cover and shows that the covering metric is
Bochner--K\"ahler. The globally conformally K\"ahler case is then treated by
compact Bochner--K\"ahler rigidity, while the strict LCK case is excluded by
Kamishima's uniformization theorem and the automorphy of the conformal factor.
\end{abstract}

\subjclass[2020]{Primary 53C55; Secondary 32Q15, 53C24}
\keywords{Chern holomorphic sectional curvature, locally conformal K\"ahler manifold, Bochner--K\"ahler metric, complex space form, Tricerri--Vanhecke Bochner tensor}
\thanks{Research of Xueyuan Wan is sponsored by the National Key R\&D Program of China (Grant No. 2024YFA1013200).}

\maketitle

\section{Introduction}

Holomorphic sectional curvature is one of the basic curvature invariants in
complex differential geometry. It measures the curvature of complex lines in
the tangent bundle and, in K\"ahler geometry, plays the role of sectional
curvature in Riemannian geometry. The K\"ahler symmetries and polarization
show that the holomorphic sectional curvature determines the full curvature
tensor. Consequently, a complete K\"ahler manifold of constant holomorphic sectional curvature is a complex space form: according to the sign of the
curvature, its simply connected model is complex projective space, complex
Euclidean space, or complex hyperbolic space
\cite{Hawley1953,Igusa1954,KobayashiNomizuII,Zheng2000}.

For a general Hermitian metric, the natural connection preserving both the
metric and the complex structure is the Chern connection. Its curvature need
not satisfy all the K\"ahler symmetries, so the Chern holomorphic sectional
curvature detects only a symmetrized part of the curvature tensor. In
particular, it need not determine the full Chern curvature. Chen, Chen, and
Nie exhibited complete non-K\"ahler Hermitian metrics on \(\C^n\), \(n>2\),
with identically vanishing Chern holomorphic sectional curvature but
nonvanishing Chern curvature \cite{ChenChenNie2021}. Thus compactness is an
essential feature of the following rigidity problem. 
These observations lead to the following conjecture, usually referred to as
the Constant Holomorphic Sectional Curvature Conjecture; see
\cite[Conjecture~1]{Zheng2025Survey}.
\begin{conjecture}[Constant Holomorphic Sectional Curvature Conjecture]
\label{conj:constant-HSC}
 Given any compact Hermitian manifold, if the holomorphic sectional curvature of its Chern (or Levi-Civita) connection is a constant $c$, then when $c \neq 0$ the metric must be K\"ahler (hence it is a complex space form), while when $c=0$ the metric must be Chern (or Levi-Civita) flat.
\end{conjecture}

The two conclusions in Conjecture~\ref{conj:constant-HSC} are genuinely
different. In complex dimension at least three, a compact Chern-flat
Hermitian manifold need not be K\"ahler; by Boothby's theorem, compact
Chern-flat Hermitian manifolds are quotients of complex Lie groups
\cite{Boothby1958}. Hence the zero-curvature case is not merely a limiting
form of the nonzero case.

The Chern version of the conjecture is known in complex dimension two through the work of Balas,
Balas--Gauduchon, and Apostolov--Davidov--Mu\v{s}karov
\cite{Balas1985,BalasGauduchon1985,ApostolovDavidovMuskarov1996}. In higher
dimensions it has been verified under several additional hypotheses,
including for certain twistor spaces \cite{DavidovGrantcharovMuskarov2009},
Chern K\"ahler-like metrics \cite{Tang2021}, complex nilmanifolds
\cite{LiZheng2022}, Bismut K\"ahler-like metrics \cite{RaoZheng2022}, and
broad classes of metrics with parallel Bismut torsion \cite{ChenZhengBTP}.
The general problem remains open; see \cite{Zheng2025Survey} for a recent
survey and further references.

A particularly natural testing ground is the class of locally conformal
K\"ahler manifolds. A Hermitian manifold \((M, h)\), with fundamental form
\(\omega_h\), is locally conformal K\"ahler, or LCK, if
\(d\omega_h=\theta\wedge\omega_h\) for a closed real one-form \(\theta\),
called the Lee form. The metric is globally conformally K\"ahler precisely
when \(\theta\) is exact \cite{DragomirOrnea1998,Vaisman1980}. On the
universal covering \(\pi:\widetilde M\to M\), there are a K\"ahler metric
\(\widetilde h\) and a smooth real-valued function \(f\) such that
\[
\pi^*h=e^f\widetilde h,
\qquad
 df=\pi^*\theta.
\]
Every deck transformation acts on \((\widetilde M,\widetilde h)\) by a
holomorphic homothety. Thus the failure of global conformal K\"ahlerness is
encoded by a homothety character of the deck group. From this perspective,
the constant holomorphic sectional curvature problem asks whether local
curvature rigidity is strong enough to eliminate this global monodromy.

Chen, Chen, and Nie made an important advance in this direction. They proved
that a compact LCK metric with constant nonpositive Chern holomorphic
sectional curvature is K\"ahler \cite[Theorem~1.1]{ChenChenNie2021}. More
generally, they showed that an LCK metric with pointwise constant
nonpositive Chern holomorphic sectional curvature is globally conformally
K\"ahler \cite[Theorem~1.2]{ChenChenNie2021}. Their argument uses relations
among the Chern--Ricci tensors and scalar curvatures, followed by global
conformal and integral identities. The sign condition enters essentially in
that method, leaving the positive constant case open for general compact LCK
manifolds. The positive case was subsequently obtained for the more
restrictive class of Vaisman metrics through results on Bismut-torsion
parallel geometry \cite{ChenZhengBTP}.

The purpose of the present paper is to remove the sign restriction entirely.
Our main result is the following.

\begin{theorem}\label{thm:main}
Let \((M^n,h)\) be a connected compact LCK manifold of complex dimension
\(n\geq2\). If its Chern holomorphic sectional curvature satisfies
\(H_h^C\equiv c\) for some \(c\in\R\), then \(h\) is K\"ahler. Moreover,
\[
R^h_{i\bar j k\bar\ell}
=
\frac{c}{2}
\bigl(
 h_{i\bar j}h_{k\bar\ell}
 +h_{i\bar\ell}h_{k\bar j}
\bigr).
\]
Consequently, \(h\) is a complex space form metric of holomorphic sectional
curvature \(c\). In particular, if \(c=0\), then \(h\) is K\"ahler flat.
\end{theorem}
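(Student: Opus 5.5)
Our plan follows the strategy announced in the abstract: pass to the universal cover, turn the Chern holomorphic sectional curvature condition into a condition on the K\"ahler metric \(\widetilde h\), conclude that \(\widetilde h\) is Bochner--K\"ahler, and then use the compactness of \(M\) to force \(f\) to be constant.

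\textbf{Step 1: the curvature identity.} On \(\widetilde M\) we have \(\pi^*h=e^f\widetilde h\), and the Chern connection of \(e^f\widetilde h\) differs from the Levi-Civita (Chern) connection of \(\widetilde h\) by \(\partial f\). Hence
\[
R^h_{i\bar jk\bar\ell}=e^f\bigl(\widetilde R_{i\bar jk\bar\ell}-f_{i\bar j}\widetilde h_{k\bar\ell}\bigr),
\]
with the sign conventions fixed so that the Chern curvature of \(e^f\widetilde h\) is \(e^f(\widetilde R-\partial\bar\partial f\otimes\widetilde h)\). Here \(f_{i\bar j}=\partial_i\partial_{\bar j}f\), which is also the \(\widetilde h\)-covariant Hessian because \(\widetilde h\) is K\"ahler.

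Setting \(H^C\equiv c\) means \(R^h(X,\bar X,X,\bar X)=c\,|X|_h^4\). By polarization over \(X\), the symmetrization of \(R^h\) in \((i,k)\) and in \((\bar j,\bar\ell)\) equals \(\tfrac c2(h\odot h)\). The tensor \(\widetilde R\) already has all K\"ahler symmetries. We therefore obtain
\[
\widetilde R_{i\bar jk\bar\ell}=\tfrac{ce^f}{2}\bigl(\widetilde h_{i\bar j}\widetilde h_{k\bar\ell}+\widetilde h_{i\bar\ell}\widetilde h_{k\bar j}\bigr)+\tfrac14\bigl(f_{i\bar j}\widetilde h_{k\bar\ell}+f_{k\bar j}\widetilde h_{i\bar\ell}+f_{i\bar\ell}\widetilde h_{k\bar j}+f_{k\bar\ell}\widetilde h_{i\bar j}\bigr).
\]
The right-hand side is the Kulkarni--Nomizu-type product of \(\widetilde h\) with a Hermitian symmetric tensor. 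Its Bochner part (the Tricerri--Vanhecke Bochner tensor) therefore vanishes, so \(\widetilde h\) is Bochner--K\"ahler. This symmetrization step is where the sign of \(c\) plays no role, and it is the decisive input from the hypothesis.

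\textbf{Step 2: the globally conformally K\"ahler case.} If \(\theta\) is exact, then \(f\) descends to \(M\) and \(\widetilde h\) descends to a compact K\"ahler metric \(g=e^{-f}h\). The identity says the Ricci form of \(g\) is \(\tfrac{(n+1)c}{2}e^fg+\tfrac{n+2}{4}\,i\partial\bar\partial f+\tfrac14(\Delta_g f)\,g\).

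We first apply Bryant's and Kamishima's rigidity results for compact Bochner--K\"ahler manifolds, which say the metric is locally symmetric. We then feed local symmetry back into the identity: \(\widetilde\nabla\widetilde R=0\) forces \(\nabla(ce^f\widetilde h+\tfrac12\partial\bar\partial f)\) to vanish in the appropriate traced sense. Contracting twice gives an elliptic equation for \(f\) on compact \(M\). The maximum principle, or integration of \(\Delta_g f\) against suitable weights, should then force \(f\) to be constant. Consequently \(h\) is a constant multiple of a K\"ahler metric, \(\partial\bar\partial f=0\), and the displayed formula for \(R^h\) follows.

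For \(n=2\) the Bochner tensor carries less information, and we would instead invoke the known two-dimensional results of Apostolov--Davidov--Mu\v{s}karov.

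\textbf{Step 3: the strict LCK case, and the expected main obstacle.} Suppose \(\theta\) is not exact. Then the deck group acts on the Bochner--K\"ahler manifold \((\widetilde M,\widetilde h)\) by homotheties, and some of these are non-isometric, with \(\gamma^*\widetilde h=\lambda_\gamma\widetilde h\) and \(f\circ\gamma=f-\log\lambda_\gamma\).

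Kamishima's uniformization theorem for compact manifolds with a Bochner-flat structure admitting non-isometric homotheties should force \(\widetilde h\) to be flat, with \(\widetilde M\) equal to \(\C^n\setminus\{0\}\) (or \(\C^n\)) and the deck group containing a contraction. Once \(\widetilde R=0\), the identity becomes a linear system. Tracing it gives \(f_{i\bar j}=\mu\widetilde h_{i\bar j}\), with \(\mu\) determined by \(c\,e^f\) up to constants. Then \(\mu\) is forced to be constant, \(f\) is a paraboloid-type function, and \(c\,e^f\) constant forces \(c=0\).

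The equivariance \(f\circ\gamma=f-\log\lambda_\gamma\) under a linear contraction \(\gamma\) then contradicts \(f=\mu|z|^2+\text{pluriharmonic}\): the left side scales quadratically, while the right side shifts by a constant. This contradiction excludes the strict case.

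We expect the main obstacle to be justifying the application of uniformization. Specifically, one must show that the Bochner--K\"ahler cover with homothetic monodromy is flat. This may also require handling the case \(c\neq0\) first, by using the automorphy of \(e^f\) to show that a non-isometric homothety cannot preserve the nonconstant coefficient \(ce^f\).
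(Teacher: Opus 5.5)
Your Step 1 and overall architecture match the paper, but Step 2 has a genuine gap, and it sits exactly at the sign issue the theorem is meant to remove. Write the identity as $R=\frac14L_g\bigl(ce^fg+\partial\bar\partial f\bigr)$. Local symmetry followed by a double contraction gives only constancy of the scalar curvature, i.e.\ $\Delta_gf+nce^f=C$. Evaluating at a maximum and a minimum of $f$ gives $nce^{f_{\min}}\le C\le nce^{f_{\max}}$. For $c<0$ this forces $f_{\min}=f_{\max}$, and for $c=0$ integration gives $C=0$ and $f$ harmonic. For $c>0$, however, there is no contradiction, and integrating against weights such as $e^{\pm f}$ only returns inequalities already implied by Cauchy--Schwarz. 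So as written Step 2 covers only $c\le0$, essentially the Chen--Chen--Nie regime.

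The missing idea is not to trace at all. Since $L_g$ is injective, $\nabla R=0$ gives the full tensor identity $\nabla_m\bigl(f_{i\bar j}+ce^fg_{i\bar j}\bigr)=0$. On a K\"ahler manifold $\nabla_mf_{i\bar j}=\nabla_if_{m\bar j}$, hence $\partial_m(ce^f)\,g_{i\bar j}=\partial_i(ce^f)\,g_{m\bar j}$. Choosing a unitary frame with $j=i\neq m$ (possible because $n\geq2$) gives $\partial(ce^f)=0$, so $f$ is constant whenever $c\neq0$. This is the same symmetry argument you already use in Step 3 to make $\mu$ constant.

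Step 3 has a second gap when $c=0$. Then $\mu=-ce^f=0$, so $f$ is pluriharmonic with no quadratic part, and your ``scales quadratically versus shifts by a constant'' contradiction is vacuous. What must be excluded is a pluriharmonic $f$ on $\C^n\setminus\{0\}$ with $f(rUz)=f(z)-2\log r$ for some $r\neq1$. This uses $n\geq2$ essentially: for $n=1$ the function $f=-2\log|z|$ does exactly this. The paper argues as follows:
\begin{itemize}
\item write $f=\operatorname{Re}F$ with $F$ holomorphic, which is possible because $\C^n\setminus\{0\}$ is simply connected;
\item extend $F$ across the origin by Hartogs;
\item observe that $F(rUz)-F(z)$ has constant real part, hence equals a constant $a$ with $\operatorname{Re}a=-2\log r\neq0$;
\item evaluate at $z=0$ to get $a=0$, a contradiction.
\end{itemize}
Equivalently, $f$ is bounded near $0$ while $f(r^kU^kz)\to+\infty$ for $0<r<1$.

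For $c\neq0$, your ``$ce^f$ constant forces $c=0$'' should be read as: it forces $f$ constant, which contradicts the automorphy because some $\lambda_\gamma\neq1$.

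Finally, the uniformization you flag as the main obstacle is not established in your proposal. The paper gets it from Kamishima's uniformization theorem for non-K\"ahler Bochner-flat LCK manifolds. Non-K\"ahlerness of $M$ comes from Vaisman's theorem, and the other model geometry is excluded by compactness. This is combined with Fried's theorem: the resulting incomplete closed similarity structure is radiant, with developing map a covering of $\R^{2n}\setminus\{p\}$. Together these make $(\widetilde M,\widetilde h)$ holomorphically isometric to $\C^n\setminus\{0\}$, with deck transformations $z\mapsto r_\gamma U_\gamma z$. In particular, $\widetilde M=\C^n$ cannot occur.
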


Theorem~\ref{thm:main} proves the Chern version of
Conjecture~\ref{conj:constant-HSC} for all compact LCK manifolds and removes
the hypothesis \(c\leq0\) from \cite[Theorem~1.1]{ChenChenNie2021}. In the
zero-curvature case the conclusion is stronger than the one predicted for a
general Hermitian manifold: the non-K\"ahler Chern-flat alternative cannot
occur in the LCK category.

The proof is driven by a conformal curvature identity on the universal
K\"ahler cover. With the notation above, the curvature tensor of
\(\widetilde h\) is shown to satisfy
\[
R^{\widetilde h}
=
\frac{ce^f}{2}G_{\widetilde h}
+
\frac14L_{\widetilde h}(A),
\qquad
A_{i\bar j}=f_{i\bar j},
\]
where \(G_{\widetilde h}\) is the complex-space-form curvature tensor and
\(L_{\widetilde h}\) is the standard \(U(n)\)-equivariant operator on
\((1,1)\)-tensors. This identity has no component in the Bochner summand of
the K\"ahler curvature decomposition. It follows that \(\widetilde h\) is
Bochner--K\"ahler, or equivalently that \(h\) is Bochner-flat in the
conformally invariant sense of Tricerri--Vanhecke
\cite{TricerriVanhecke1981,Kamishima2006}.

We then distinguish two branches. When the Lee form is exact,
\(\widetilde h\) descends to a compact Bochner--K\"ahler metric in the
conformal class of \(h\). 
The compact classification of Bochner--K\"ahler manifolds, due to Kamishima
and also recovered in Bryant's work, implies that every connected compact
smooth Bochner--K\"ahler manifold is a compact quotient of a symmetric
Bochner--K\"ahler model; in particular, this metric is locally symmetric
\cite{Kamishima1994,Kamishima2005Correction,Bryant2001}.
 Differentiating the curvature
identity and using K\"ahler commutation relations produces an
overdetermined equation for the complex Hessian of the conformal factor,
which forces that factor to be constant. When the Lee form is not exact,
Kamishima's uniformization theorem, together with Fried's classification of
closed similarity manifolds, identifies the universal K\"ahler cover with
punctured Euclidean space and supplies a strict similarity in the deck group
\cite{Kamishima2006,Fried1980}. The flat form of the curvature identity is
then incompatible with the automorphy of \(f\), both when \(c\neq0\) and
when \(c=0\).

The paper is organized as follows. Section~2 reviews Chern holomorphic
sectional curvature and the universal K\"ahler cover of an LCK manifold. In
Section~3 we derive the curvature identity and prove Bochner-flatness.
Sections~4 and~5 treat the globally conformally K\"ahler and strict LCK
branches, respectively. The proof of Theorem~\ref{thm:main} is completed in
Section~6.

\

{\bf Acknowledgements:} The authors are grateful to Professors Fangyang Zheng and Xiaolan Nie for their interest in this work, encouraging comments, and helpful communications. The
second author is supported by the National Key R\&D Program of China
(Grant No. 2024YFA1013200).

\vspace{1.5mm}

\section{Preliminaries}

Throughout the paper, manifolds are connected and have complex dimension
\(n\geq2\). We use the Einstein summation convention. All curvature
components are written in a local holomorphic frame and follow the convention
specified below.

\subsection{Chern holomorphic sectional curvature}

Let \(M^n\) be a connected complex manifold of
complex dimension \(n\). Let \((M^n,h)\) be a Hermitian manifold. For a
local holomorphic coordinate system \((z^1,\ldots,z^n)\), we write
\[
        h_{i\bar j}
        =
        h\left(
        \frac{\partial}{\partial z^i},
        \frac{\partial}{\partial \bar z^j}
        \right),
\]
and denote by
\[
        \omega_h
        =
        \sqrt{-1}\,h_{i\bar j}\,dz^i\wedge d\bar z^j
\]
the fundamental form of \(h\).

The Chern connection \(\nabla^C\) is the unique Hermitian connection on
\(T^{1,0}M\) whose \((0,1)\)-part is \(\bar\partial\). In local holomorphic
coordinates, the Chern connection $\nabla^C$ is given by
\[
\nabla^C_{\frac{\partial}{\partial z^i}}\frac{\partial}{\partial z^j}=\Gamma^k_{ij}\frac{\partial}{\partial z^k},\quad \Gamma^k_{ij}=h^{k\bar\ell}\partial_i h_{j\bar\ell},
\] and we use the curvature convention
\[
R^C_{i\bar j k\bar\ell}
=
-h_{p\bar\ell}\,\partial_{\bar j}\Gamma^p_{ik}
=
-\partial_i\partial_{\bar j}h_{k\bar\ell}
+h^{p\bar q}\partial_i h_{k\bar q}\partial_{\bar j}h_{p\bar\ell}.
\]
For \(0\neq X=X^i\partial_i\in T^{1,0}_pM\), its Chern holomorphic sectional
curvature is
\begin{equation}\label{eq:Chern-HSC}
H_h^C(X)
=
\frac{R^C_{i\bar j k\bar\ell}X^i\bar X^jX^k\bar X^\ell}
     {(h_{i\bar j}X^i\bar X^j)^2}.
\end{equation}
This depends only on the complex line spanned by \(X\).

For a non-K\"ahler metric, the Chern curvature tensor need not have the
K\"ahler symmetries. The quantity in \eqref{eq:Chern-HSC} therefore detects
the symmetrized tensor
\begin{equation}\label{eq:Chern-symmetrization}
K^C_{i\bar j k\bar\ell}
=
\frac14\bigl(
 R^C_{i\bar j k\bar\ell}
 +R^C_{k\bar j i\bar\ell}
 +R^C_{i\bar\ell k\bar j}
 +R^C_{k\bar\ell i\bar j}
\bigr).
\end{equation}
The following polarization statement is standard; see
\cite{Balas1985} and \cite[Proposition~3.4]{ChenChenNie2021}.

\begin{lemma}\label{lem:Chern-polarization}
At a point \(p\in M\), one has \[H_h^C(X)=c\] for every
\(0\neq X\in T^{1,0}_pM\) if and only if
\begin{equation}\label{eq:Chern-polarization}
K^C_{i\bar j k\bar\ell}
=
\frac{c}{2}
\bigl(
 h_{i\bar j}h_{k\bar\ell}
 +h_{i\bar\ell}h_{k\bar j}
\bigr)
\end{equation}
at \(p\).
\end{lemma}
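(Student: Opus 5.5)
The plan is to reduce the lemma to a polarization statement for a Hermitian-symmetric tensor, in the same spirit as the K\"ahler case. First, observe that for every \(X\) the numerator of \eqref{eq:Chern-HSC} satisfies
\[
R^C_{i\bar j k\bar\ell}X^i\bar X^jX^k\bar X^\ell=K^C_{i\bar j k\bar\ell}X^i\bar X^jX^k\bar X^\ell .
\]
This holds because the four terms in \eqref{eq:Chern-symmetrization} differ only by relabeling the dummy indices \(i\leftrightarrow k\) and \(j\leftrightarrow\ell\), and the contraction against \(X^iX^k\bar X^j\bar X^\ell\) is invariant under these relabelings.

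Next, set
\[
T_{i\bar j k\bar\ell}=K^C_{i\bar j k\bar\ell}-\frac c2\bigl(h_{i\bar j}h_{k\bar\ell}+h_{i\bar\ell}h_{k\bar j}\bigr).
\]
The term \(h_{i\bar j}h_{k\bar\ell}+h_{i\bar\ell}h_{k\bar j}\) contracted with \(X\) gives \(2|X|_h^4\). So \(H^C_h\equiv c\) at \(p\) is equivalent to \(T(X,\bar X,X,\bar X)=0\) for all \(X\in T^{1,0}_pM\), where \(X=0\) is trivially included. The implication ``\(\Leftarrow\)'' is therefore immediate: \eqref{eq:Chern-polarization} says \(T=0\), and dividing by \(|X|^4\) gives \(H^C_h(X)=c\).

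For ``\(\Rightarrow\)'', note that \(T\) is symmetric in \((i,k)\) and in \((\bar j,\bar\ell)\) by construction. We may therefore view \(T\) as a sesquilinear form
\[
B(S,\bar U)=T_{i\bar j k\bar\ell}S^{ik}\overline{U^{j\ell}}
\]
on the symmetric tensors \(\mathrm{Sym}^2T^{1,0}_p\). The hypothesis says \(B(X\odot X,\overline{X\odot X})=0\) for all \(X\). To pass from this to \(B=0\) I would polarize in two steps:
\begin{enumerate}
\item Replace \(X\) by \(X+tY\) with \(t\in\C\). The quantity \(T(X+tY,\overline{X+tY},X+tY,\overline{X+tY})\) is a real polynomial in \(t,\bar t\) that vanishes identically, so every coefficient of \(t^a\bar t^b\) vanishes.
\item Iterate this with three or four independent vectors. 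This yields \(T(X,\bar Y,Z,\bar W)=0\) for all \(X,Y,Z,W\), using the symmetry in \((i,k)\) and in \((\bar j,\bar\ell)\) to collapse the symmetrized expressions.
\end{enumerate}

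An equivalent and cleaner way to organize the second step is as follows. The map \(X\mapsto X\odot X\) has image spanning \(\mathrm{Sym}^2\). A sesquilinear form \(Q(v,\bar w)\), where \(v=X^{\otimes2}\) and \(w=Y^{\otimes2}\), is a polynomial of bidegree \((2,2)\) in \((X,\bar Y)\). Its restriction to the diagonal \(Y=X\) determines it, because a polynomial in \(X,\bar X\) that vanishes identically has vanishing coefficients, treating \(X\) and \(\bar X\) as independent variables. Hence \(T(X,\bar Y,X,\bar Y)=0\) for all \(X,Y\). Polarizing the holomorphic and antiholomorphic slots separately using the \((i,k)\) and \((\bar j,\bar\ell)\) symmetries then gives \(T=0\), which is \eqref{eq:Chern-polarization}.

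The only real point needing care is this polarization step, specifically that vanishing on the diagonal forces vanishing of the full tensor. It works precisely because we first symmetrized. The unsymmetrized \(R^C\) cannot be recovered in this way, which is why the lemma is stated for \(K^C\). I expect no further obstacle.
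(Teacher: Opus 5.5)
Your proposal is correct. The numerator of $H^C_h$ only sees $K^C$, and since $T$ is symmetric in $(i,k)$ and in $(\bar j,\bar\ell)$, the vanishing of the bidegree-$(2,2)$ polynomial $T_{i\bar jk\bar\ell}X^i\bar X^jX^k\bar X^\ell$ forces all its coefficients to vanish; these coefficients are nonzero multiples of the components $T_{i\bar jk\bar\ell}$, so you could even skip the intermediate step through $T(X,\bar Y,X,\bar Y)$.

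The paper gives no proof of its own: it cites the lemma as standard (\cite{Balas1985}, \cite[Proposition~3.4]{ChenChenNie2021}), and your argument is the standard polarization argument behind those references.
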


\subsection{LCK metrics and their universal K\"ahler covers}

A Hermitian metric \(h\) is locally conformal K\"ahler if its fundamental
form satisfies
\begin{equation}\label{eq:LCK}
d\omega_h=\theta\wedge\omega_h,
\qquad
d\theta=0,
\end{equation}
for a real one-form \(\theta\), called the Lee form. Equivalently, every
point has a neighborhood on which a conformal multiple of \(h\) is K\"ahler.
The metric is globally conformally K\"ahler if \(\theta\) is exact; see
\cite{DragomirOrnea1998,Vaisman1980}.

Let \(\pi:\widetilde M\to M\) be the universal covering. Since
\(\pi^*\theta\) is exact, choose \(f\in C^\infty(\widetilde M,\R)\) such that
\(df=\pi^*\theta\). Then
\begin{equation}\label{eq:universal-Kahler-cover}
\widetilde h=e^{-f}\pi^*h
\qquad\text{and hence}\qquad
\pi^*h=e^f\widetilde h
\end{equation}
defines a K\"ahler metric on \(\widetilde M\). We refer to
\((\widetilde M,\widetilde h)\) as the universal K\"ahler cover associated
with \(h\). Changing \(f\) by a constant only rescales \(\widetilde h\).

Let \(\gamma\) be a deck transformation. Since \(\pi\circ\gamma=\pi\), the
metric \(\pi^*h=e^f\widetilde h\) is deck invariant, and therefore
\[
e^{f\circ\gamma}\gamma^*\widetilde h=e^f\widetilde h.
\]
Moreover,
\(
d(f\circ\gamma)=\gamma^*df=(\pi\circ\gamma)^*\theta=df
\), so \(f\circ\gamma-f\) is constant. Thus there is a uniquely determined
number \(\lambda_\gamma>0\) such that
\begin{equation}\label{eq:deck-automorphy}
f\circ\gamma=f-2\log\lambda_\gamma,
\qquad
\gamma^*\widetilde h=\lambda_\gamma^2\widetilde h.
\end{equation}
Hence every deck transformation is a holomorphic homothety of
\((\widetilde M,\widetilde h)\), and
\(\gamma\mapsto\lambda_\gamma\) is a multiplicative character of the deck
group.

The Lee form is exact if and only if this character is trivial. Indeed, if
\(\theta=du\) on \(M\), then, after adding a constant to \(u\), one has
\(f=\pi^*u\), so \(f\) is deck invariant and \(\lambda_\gamma=1\) for every
\(\gamma\). Conversely, if every \(\lambda_\gamma=1\), then \(f\) descends
to a function \(u\) on \(M\), and \(df=\pi^*du=\pi^*\theta\) implies
\(du=\theta\). Equivalently, the Lee form is exact precisely when all deck
transformations act isometrically on \((\widetilde M,\widetilde h)\).

\section{The curvature identity and Bochner-flatness}

We now pass to the universal K\"ahler cover
\(\pi:\widetilde M\to M\), where \(\pi^*h=e^f\widetilde h\). The goal of
this section is to translate the condition \(H_h^C\equiv c\) into a
curvature identity for the K\"ahler metric \(\widetilde h\). This identity
shows that the Bochner component of the curvature of \(\widetilde h\)
vanishes, and hence that the original LCK metric \(h\) is Bochner-flat in
the sense of Tricerri--Vanhecke \cite{TricerriVanhecke1981}; see also
\cite[\S7]{Kamishima2006} for the role of this conformal Bochner tensor in
Bochner-flat LCK geometry. This reduction will be used in both the
globally conformally K\"ahler and the strict LCK branches.

\subsection{The conformal curvature identity}

We first introduce two algebraic tensors. Let \((V,q)\) be a Hermitian
vector space and let \(B\) be a complex \((1,1)\)-tensor. Define
\begin{align}
(G_q)_{i\bar j k\bar\ell}
&=
q_{i\bar j}q_{k\bar\ell}
+q_{i\bar\ell}q_{k\bar j},
\label{eq:G}
\\
(L_q(B))_{i\bar j k\bar\ell}
&=
B_{i\bar j}q_{k\bar\ell}
+B_{k\bar j}q_{i\bar\ell}
+B_{i\bar\ell}q_{k\bar j}
+B_{k\bar\ell}q_{i\bar j}.
\label{eq:Loperator}
\end{align}
Thus \(\frac{c}{2}G_q\) has holomorphic sectional curvature \(c\), and
\(L_q(q)=2G_q\).

We shall use the following elementary observation twice.

\begin{lemma}\label{lem:L-linear-algebra}
Let \((V,q)\) have complex dimension \(n\geq2\). If
\(L_q(B)=\mu G_q\) for a complex \((1,1)\)-tensor \(B\) and
\(\mu\in\C\), then \(B=\frac{\mu}{2}q\). In particular, \(L_q\) is
injective.
\end{lemma}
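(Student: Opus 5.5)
We work in a unitary basis for \(q\), so \(q_{i\bar j}=\delta_{ij}\). The plan is to evaluate the identity \(L_q(B)=\mu G_q\) on a few well-chosen index patterns, using \(n\geq2\) to pick distinct indices. Written out, the hypothesis reads
\[
B_{i\bar j}\delta_{k\ell}+B_{k\bar j}\delta_{i\ell}+B_{i\bar\ell}\delta_{kj}+B_{k\bar\ell}\delta_{ij}
=\mu(\delta_{ij}\delta_{k\ell}+\delta_{i\ell}\delta_{kj}).
\]

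First, the off-diagonal entries of \(B\) vanish. Fix \(i\neq j\) and choose an index \(k\) with \(k\neq i\) and \(k\neq j\), then set \(\ell=k\). When \(n\geq3\) such a \(k\) exists, and the identity gives \(B_{i\bar j}=0\). When \(n=2\) no such \(k\) exists, so instead take \(k=\ell=i\). The left side becomes \(B_{i\bar j}+B_{i\bar j}+0+0=2B_{i\bar j}\), because \(\delta_{kj}=\delta_{ij}=0\). The right side is \(0\), since \(\delta_{ij}=0\) and \(\delta_{i\ell}\delta_{kj}=\delta_{ij}=0\). This choice in fact works for every \(n\geq2\) and gives \(B_{i\bar j}=0\) for \(i\neq j\).

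Second, the diagonal entries are determined. Taking \(i=j=k=\ell\) gives \(4B_{i\bar i}=2\mu\), so \(B_{i\bar i}=\mu/2\). Alternatively, take \(i=j\) and \(k=\ell\) with \(i\neq k\), which is possible since \(n\geq2\). The left side is \(B_{i\bar i}+0+0+B_{k\bar k}\) and the right side is \(\mu\), so all diagonal entries agree, consistent with the first computation.

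Together these show \(B=\frac{\mu}{2}q\). For injectivity, note that \(L_q\) is linear. If \(L_q(B)=0=0\cdot G_q\), the result with \(\mu=0\) gives \(B=0\).

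There is no real obstacle here. The only point needing care is the off-diagonal step in dimension two, where we must use the choice \(k=\ell=i\) rather than an index different from both \(i\) and \(j\).
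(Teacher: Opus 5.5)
Your proof is correct, but it takes a different route from the paper's. The paper contracts $L_q(B)=\mu G_q$ in the last two indices to get $(n+2)B_{i\bar j}+(\tr_q B)\,q_{i\bar j}=\mu(n+1)q_{i\bar j}$. It then takes the $q$-trace to find $\tr_q B=\mu n/2$ and substitutes back.

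You instead read off individual components in a unitary frame. The single choice $k=\ell=i$ already does all the work: it gives $2B_{i\bar j}=0$ for $i\neq j$ and $4B_{i\bar i}=2\mu$. Your other index choices, the $k\neq i,j$ case and the $i=j$, $k=\ell$ check, are therefore redundant.

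What the paper's contraction buys is a basis-free argument. It also reuses the Ricci-type contraction formula $q^{k\bar\ell}(L_q(S))_{i\bar j k\bar\ell}=(n+2)S_{i\bar j}+(\tr_q S)q_{i\bar j}$, which is needed anyway for the K\"ahler curvature decomposition in Section 3. Injectivity then appears as invertibility of this contraction map.

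Your componentwise argument is more hands-on and shows slightly more. The components $(L_qB)_{i\bar j i\bar i}$ alone recover $B$, with no trace step needed.

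Neither argument really uses $n\geq2$. Your decisive index pattern makes sense when $n=1$, and so does the paper's trace computation, so the emphasis on the two-dimensional case is unnecessary.
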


\begin{proof}
Contracting the last two indices gives
\begin{equation}\label{eq:contract-L}
(n+2)B_{i\bar j}+(\tr_qB)q_{i\bar j}
=
\mu(n+1)q_{i\bar j}.
\end{equation}
Taking the \(q\)-trace yields \(\tr_qB=\frac{\mu n}{2}\). Substitution into
\eqref{eq:contract-L} gives \(B=\frac{\mu}{2}q\).
\end{proof}

Return now to the universal K\"ahler cover
\(\pi:(\widetilde M,\widetilde h)\to(M,h)\), and put
\(\widehat h=\pi^*h=e^f\widetilde h\). Let \(\widehat R^C\) be the Chern
curvature of \(\widehat h\), and let \(R^{\widetilde h}\) be the K\"ahler
curvature of \(\widetilde h\). The conformal change formula for Chern
curvature is
\begin{equation}\label{eq:conformal-curvature}
\widehat R^C_{i\bar j k\bar\ell}
=
e^f\bigl(
 R^{\widetilde h}_{i\bar j k\bar\ell}
 -f_{i\bar j}\widetilde h_{k\bar\ell}
\bigr),
\end{equation}
where \(f_{i\bar j}=\partial_i\partial_{\bar j}f\); compare
\cite[(3.17)]{ChenChenNie2021}.

\begin{proposition}\label{prop:curvature-equation}
Assume that \(H_h^C\equiv c\). On the universal K\"ahler cover one has
\begin{equation}\label{eq:main-curv-eq}
R^{\widetilde h}
=
\frac{\alpha}{2}G_{\widetilde h}
+
\frac14L_{\widetilde h}(A),
\qquad
\alpha=ce^f,
\qquad
A_{i\bar j}=f_{i\bar j}.
\end{equation}
\end{proposition}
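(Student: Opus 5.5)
The plan is to pull the hypothesis \(H_h^C\equiv c\) back to the universal cover, combine it with the conformal change formula \eqref{eq:conformal-curvature}, and then symmetrize.

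Since \(\pi\) is a local biholomorphic isometry from \((\widetilde M,\widehat h)\) to \((M,h)\), the metric \(\widehat h=\pi^*h\) also has Chern holomorphic sectional curvature identically \(c\). Applying Lemma~\ref{lem:Chern-polarization} to \(\widehat h\) gives \(\widehat K^C=\frac c2 G_{\widehat h}\), where \(\widehat K^C\) is the symmetrization \eqref{eq:Chern-symmetrization} of \(\widehat R^C\). Because \(\widehat h=e^f\widetilde h\), we have \(G_{\widehat h}=e^{2f}G_{\widetilde h}\).

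Next I compute \(\widehat K^C\) from \eqref{eq:conformal-curvature}. The term \(e^fR^{\widetilde h}\) is already invariant under the symmetrization, since a Kähler curvature tensor is symmetric in \(i\leftrightarrow k\) and in \(j\leftrightarrow \ell\). The term \(-e^f f_{i\bar j}\widetilde h_{k\bar\ell}\) symmetrizes to \(-\frac{e^f}{4}\bigl(f_{i\bar j}\widetilde h_{k\bar\ell}+f_{k\bar j}\widetilde h_{i\bar\ell}+f_{i\bar\ell}\widetilde h_{k\bar j}+f_{k\bar\ell}\widetilde h_{i\bar j}\bigr)\), which is exactly \(-\frac{e^f}{4}L_{\widetilde h}(A)\) by \eqref{eq:Loperator}. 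This yields
\[
e^f\Bigl(R^{\widetilde h}-\tfrac14 L_{\widetilde h}(A)\Bigr)=\tfrac c2 e^{2f}G_{\widetilde h}.
\]
Dividing by \(e^f\) gives \eqref{eq:main-curv-eq} with \(\alpha=ce^f\).

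The only genuine step is the correctness of the conformal formula \eqref{eq:conformal-curvature} with the paper's sign convention. To check it, I would compute directly. With \(\widehat h_{k\bar\ell}=e^f\widetilde h_{k\bar\ell}\), the Chern Christoffel symbols are \(\widehat\Gamma^p_{ik}=\widetilde\Gamma^p_{ik}+f_i\delta^p_k\). Then
\[
-\widehat h_{p\bar\ell}\,\partial_{\bar j}\widehat\Gamma^p_{ik}
=e^f\bigl(R^{\widetilde h}_{i\bar jk\bar\ell}-f_{i\bar j}\widetilde h_{k\bar\ell}\bigr),
\]
which is the claimed formula. Beyond this, the argument is bookkeeping of the symmetrization.
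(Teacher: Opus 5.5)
Your proposal is correct and follows the paper's proof: pull the hypothesis back to \(\widehat h=\pi^*h\), apply Lemma~\ref{lem:Chern-polarization} to get \(\widehat K^C=\frac{c}{2}e^{2f}G_{\widetilde h}\), symmetrize \eqref{eq:conformal-curvature} to \(e^f\bigl(R^{\widetilde h}-\frac14L_{\widetilde h}(A)\bigr)\), and compare. Your direct check of \eqref{eq:conformal-curvature} via \(\widehat\Gamma^p_{ik}=\widetilde\Gamma^p_{ik}+f_i\delta^p_k\) is also right, and it supplies a computation that the paper only cites from Chen--Chen--Nie.
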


\begin{proof}
Symmetrizing \eqref{eq:conformal-curvature} and using the K\"ahler
symmetries of \(R^{\widetilde h}\), we obtain
\begin{equation}\label{eq:symmetrized-conformal-curvature}
\widehat K^C
=
e^f\left(
 R^{\widetilde h}-\frac14L_{\widetilde h}(A)
\right).
\end{equation}
Since \(\widehat h=\pi^*h\) also has constant Chern holomorphic sectional
curvature \(c\), Lemma~\ref{lem:Chern-polarization} gives
\[
\widehat K^C
=
\frac{c}{2}G_{\widehat h}
=
\frac{ce^{2f}}{2}G_{\widetilde h}.
\]
Comparing this with \eqref{eq:symmetrized-conformal-curvature} and dividing
by \(e^f\) yields \eqref{eq:main-curv-eq}.
\end{proof}

\subsection{The K\"ahler curvature decomposition}

Let \((V,q)\) be a Hermitian vector space of complex dimension
\(n\geq2\), and let \(\mathcal K(V)\) denote the real vector space of
algebraic K\"ahler curvature tensors on \(V\). Thus, in a unitary basis,
an element \(R\in\mathcal K(V)\) satisfies
\[
R_{i\bar j k\bar\ell}
=
R_{k\bar j i\bar\ell}
=
R_{i\bar\ell k\bar j},
\qquad
\overline{R_{i\bar j k\bar\ell}}
=
R_{j\bar i\ell\bar k}.
\]
Its Ricci and scalar contractions are defined by
\[
\Ric_{i\bar j}
=
q^{k\bar\ell}R_{i\bar j k\bar\ell},
\qquad
s=q^{i\bar j}\Ric_{i\bar j},
\]
and the trace-free Ricci tensor is
\(\Ric^0=\Ric-\frac{s}{n}q\).

Let \(\mathcal H(V)\) denote the real vector space of Hermitian
\((1,1)\)-tensors on \(V\), and let
\[
\mathcal H_0(V)
=
\{S\in\mathcal H(V):\operatorname{tr}_qS=0\}.
\]
For \(S\in\mathcal H(V)\), recall that
\[
\begin{aligned}
(G_q)_{i\bar j k\bar\ell}
&=
q_{i\bar j}q_{k\bar\ell}
+
q_{i\bar\ell}q_{k\bar j},\\
(L_q(S))_{i\bar j k\bar\ell}
&=
S_{i\bar j}q_{k\bar\ell}
+
S_{k\bar j}q_{i\bar\ell}
+
S_{i\bar\ell}q_{k\bar j}
+
S_{k\bar\ell}q_{i\bar j}.
\end{aligned}
\]
Contracting in the last two indices gives
\[
q^{k\bar\ell}(G_q)_{i\bar j k\bar\ell}
=
(n+1)q_{i\bar j}
\]
and
\[
q^{k\bar\ell}(L_q(S))_{i\bar j k\bar\ell}
=
(n+2)S_{i\bar j}
+
(\operatorname{tr}_qS)q_{i\bar j}.
\]
In particular, \(L_q(q)=2G_q\), while for
\(S\in\mathcal H_0(V)\),
\[
q^{k\bar\ell}(L_q(S))_{i\bar j k\bar\ell}
=
(n+2)S_{i\bar j}.
\]

With respect to the natural inner product induced by \(q\), the space
\(\mathcal K(V)\) admits the orthogonal \(U(n)\)-irreducible
decomposition
\begin{equation}\label{eq:Kahler-curvature-decomposition-space}
\mathcal K(V)
=
\mathcal B(V)
\oplus
L_q\bigl(\mathcal H_0(V)\bigr)
\oplus
\mathbb R G_q,
\end{equation}
where
\[
\mathcal B(V)
=
\left\{
R\in\mathcal K(V):
q^{k\bar\ell}R_{i\bar j k\bar\ell}=0
\right\}
\]
is the Bochner curvature module. Equivalently, every
\(R\in\mathcal K(V)\) has the unique decomposition
\begin{equation}\label{eq:Kahler-curvature-decomposition}
R
=
B_q(R)
+
\frac{1}{n+2}L_q(\Ric^0)
+
\frac{s}{n(n+1)}G_q.
\end{equation}
The coefficients in \eqref{eq:Kahler-curvature-decomposition} follow
directly from the preceding contraction identities and correspond to
the convention \(s=q^{i\bar j}\Ric_{i\bar j}\). The tensor \(B_q(R)\)
has vanishing Ricci contraction and is called the Bochner curvature
tensor of \(R\).

This decomposition goes back to Bochner \cite{Bochner1949}; see also
\cite[(2.63)]{Besse1987} and
\cite[\S2.1.3]{Bryant2001}. Notice that the normalization of the scalar
curvature varies in the literature, whereas
\eqref{eq:Kahler-curvature-decomposition} is normalized according to
the contraction conventions used here.

\medskip

\noindent\textbf{Bochner-flatness terminology.}
A K\"ahler metric \(k\) is called \emph{Bochner--K\"ahler}, or
\emph{Bochner-flat}, if its Bochner curvature tensor vanishes
identically:
\[
B(k)\equiv0.
\]
For a general Hermitian metric \(k\), let \(B^{\mathrm{TV}}(k)\) denote
the conformally invariant Bochner component in the Tricerri--Vanhecke
\(U(n)\)-decomposition of the Riemannian curvature tensor. The metric \(k\) is said to be
\emph{Bochner-flat in the sense of Tricerri--Vanhecke} if
\(B^{\mathrm{TV}}(k)\equiv0\). Its vanishing
is preserved under Hermitian conformal changes, and on K\"ahler metrics
it agrees with the ordinary K\"ahler Bochner tensor. In complex dimension
two, we use the corrected four-dimensional formula; see
\cite{TricerriVanhecke1981,EuhEtAl2011,BandeBlairHadjar2015}.

\begin{corollary}\label{cor:Bochner-flat-cover}
The universal K\"ahler cover
\((\widetilde M,\widetilde h)\) is Bochner--K\"ahler. Equivalently, the
original LCK metric \(h\) is Bochner-flat in the sense of
Tricerri--Vanhecke.
\end{corollary}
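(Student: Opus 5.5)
The plan is to read off Bochner-flatness directly from Proposition~\ref{prop:curvature-equation}, using the decomposition \eqref{eq:Kahler-curvature-decomposition}.

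At each point \(p\in\widetilde M\), set \(V=T^{1,0}_p\widetilde M\) and \(q=\widetilde h_p\). The tensor \(A=(f_{i\bar j})\) is Hermitian because \(f\) is real, so \(A\in\mathcal H(V)\). Write \(A=A^0+\frac{\tr_qA}{n}q\) with \(A^0\in\mathcal H_0(V)\). Since \(L_q(q)=2G_q\), identity \eqref{eq:main-curv-eq} becomes
\[
R^{\widetilde h}=\Bigl(\frac{\alpha}{2}+\frac{\tr_qA}{2n}\Bigr)G_q+\frac14L_q(A^0).
\]
Here \(\alpha=ce^f\) is real, so the right-hand side lies in \(L_q(\mathcal H_0(V))\oplus\R G_q\), and it has no component in \(\mathcal B(V)\).

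The decomposition \eqref{eq:Kahler-curvature-decomposition-space} is direct, so this forces \(B_q(R^{\widetilde h})=0\) at \(p\). As a check without invoking orthogonality, contract the display using the identities preceding \eqref{eq:Kahler-curvature-decomposition}. This gives
\[
\Ric^0=\frac{n+2}{4}A^0,\qquad s=(n+1)\Bigl(\frac{n\alpha}{2}+\frac{\tr_qA}{2}\Bigr).
\]
Substituting these into \eqref{eq:Kahler-curvature-decomposition}, the terms \(\frac{1}{n+2}L_q(\Ric^0)\) and \(\frac{s}{n(n+1)}G_q\) reproduce the right-hand side exactly, so \(B_q(R^{\widetilde h})=0\). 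Since \(p\) was arbitrary, \(\widetilde h\) is Bochner--K\"ahler.

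For the equivalent formulation, I will use the properties of the Tricerri--Vanhecke tensor recalled above. It is invariant under Hermitian conformal changes and agrees with the K\"ahler Bochner tensor on K\"ahler metrics. Hence
\[
\pi^*B^{\mathrm{TV}}(h)=B^{\mathrm{TV}}(\pi^*h)=B^{\mathrm{TV}}(e^f\widetilde h)=B^{\mathrm{TV}}(\widetilde h)=B(\widetilde h)=0.
\]
Because \(\pi\) is a local biholomorphic isometry from \(\pi^*h\) to \(h\), it follows that \(B^{\mathrm{TV}}(h)\equiv0\). The converse direction runs the same chain backwards.

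The argument is essentially bookkeeping. The only point needing care is the \(n=2\) case, where the Tricerri--Vanhecke formula requires the corrected four-dimensional version. I would simply cite the conformal invariance and the agreement on K\"ahler metrics from \cite{TricerriVanhecke1981,EuhEtAl2011,BandeBlairHadjar2015} rather than reprove them.
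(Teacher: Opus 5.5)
Your proposal is correct and follows the paper's own argument. You use the same trace/trace-free splitting of \(A\) to place \(R^{\widetilde h}\) in \(L_q(\mathcal H_0(V))\oplus\R G_q\), and the same chain (pullback naturality, conformal invariance, agreement with the K\"ahler Bochner tensor) for the Tricerri--Vanhecke reformulation; your explicit computation of \(\Ric^0\) and \(s\) is a correct extra check that the paper omits.
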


\begin{proof}
By \eqref{eq:main-curv-eq}, the curvature tensor of
\(\widetilde h\) satisfies
\[
R^{\widetilde h}
=
\frac{\alpha}{2}G_{\widetilde h}
+
\frac14L_{\widetilde h}(A),
\qquad
\alpha=ce^f,
\qquad
A_{i\bar j}=f_{i\bar j}.
\]
Decompose \(A\) into its trace-free and pure-trace parts:
\[
A
=
A^0
+
\frac{\operatorname{tr}_{\widetilde h}A}{n}\widetilde h.
\]
Since
\(L_{\widetilde h}(\widetilde h)=2G_{\widetilde h}\), we obtain
\[
R^{\widetilde h}
=
\frac14L_{\widetilde h}(A^0)
+
\left(
\frac{\alpha}{2}
+
\frac{\operatorname{tr}_{\widetilde h}A}{2n}
\right)G_{\widetilde h}.
\]
Thus, pointwise,
\[
R^{\widetilde h}
\in
L_{\widetilde h}\bigl(\mathcal H_0(T^{1,0}\widetilde M)\bigr)
\oplus
\mathbb R G_{\widetilde h}.
\]
It follows from
\eqref{eq:Kahler-curvature-decomposition-space} that the Bochner
component of \(R^{\widetilde h}\) vanishes. Hence
\((\widetilde M,\widetilde h)\) is Bochner--K\"ahler.

It remains to relate this conclusion to the original metric \(h\).
The Tricerri--Vanhecke tensor is natural under pullback by local
diffeomorphisms, so
\[
B^{\mathrm{TV}}(\pi^*h)
=
\pi^*B^{\mathrm{TV}}(h).
\]
Since \(\pi\) is surjective, this gives
\[
B^{\mathrm{TV}}(h)\equiv0
\quad\Longleftrightarrow\quad
B^{\mathrm{TV}}(\pi^*h)\equiv0.
\]
Using \(\pi^*h=e^f\widetilde h\), the conformal invariance of the
vanishing condition gives
\[
B^{\mathrm{TV}}(\pi^*h)\equiv0
\quad\Longleftrightarrow\quad
B^{\mathrm{TV}}(\widetilde h)\equiv0.
\]
Finally, since \(\widetilde h\) is K\"ahler,
\[
B^{\mathrm{TV}}(\widetilde h)\equiv0
\quad\Longleftrightarrow\quad
B(\widetilde h)\equiv0.
\]
Combining these equivalences proves the result.
\end{proof}
\section{The globally conformally K\"ahler branch}

We first treat the case in which the Lee form is exact.

\begin{proposition}\label{prop:GCK-branch}
Let \((M^n,h)\) be a connected compact globally conformally K\"ahler
Hermitian manifold of complex dimension \(n\geq2\). If \(H_h^C\equiv c\),
then \(h\) is K\"ahler.
\end{proposition}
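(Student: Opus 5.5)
In the globally conformally K\"ahler case the Lee form is exact, \(\theta=du\), and the K\"ahler metric \(g=e^{-u}h\) is defined on \(M\) itself. I will work directly on the compact manifold \((M,g)\) with \(h=e^ug\). Pulling back via \(\pi\) identifies \(g\) locally with \(\widetilde h\) and \(u\) with \(f\). Proposition~\ref{prop:curvature-equation} then holds downstairs:
\[
R^{g}=\frac{ce^u}{2}G_g+\frac14L_g(A),\qquad A_{i\bar j}=u_{i\bar j}.
\]
By Corollary~\ref{cor:Bochner-flat-cover}, \(g\) is a compact Bochner--K\"ahler metric. The aim is to show that \(u\) is constant; then \(h=e^ug\) is K\"ahler.

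\emph{Step 1.} I will invoke the compact classification of Bochner--K\"ahler manifolds (Kamishima, Bryant) to conclude that \(g\) is locally symmetric, so \(\nabla R^g=0\). Differentiating the displayed identity gives
\[
0=\frac{c e^u}{2}\,u_m\,G_g+\frac14L_g(\nabla_m A)
\quad\text{for each index } m.
\]
Lemma~\ref{lem:L-linear-algebra} applied to \(B=\nabla_mA\) with \(\mu=-2ce^uu_m\) yields
\[
\nabla_m u_{i\bar j}=-c\,e^u u_m\, g_{i\bar j}.
\]
This is the overdetermined equation for the complex Hessian of the conformal factor.

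\emph{Step 2.} Next I exploit the K\"ahler commutation relations. Since \(\nabla_m u_{i\bar j}=\nabla_i u_{m\bar j}\) holds automatically, I take the conjugate-direction derivative \(\nabla_{\bar k}\) of \(\nabla_m u_{i\bar j}\) and commute indices using the Ricci identity with \(R^g\). Comparing the two orders gives
\[
\nabla_{\bar k}\nabla_m u_{i\bar j}=-c\bigl(e^u u_{\bar k}u_m+e^u u_{m\bar k}\bigr)g_{i\bar j},
\]
and this must match \(\nabla_{\bar k}\nabla_i u_{m\bar j}\) plus curvature terms. The curvature terms are themselves expressed through \(A\) and \(ce^u\) by the identity for \(R^g\). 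The outcome should be an algebraic relation among \(u_{i\bar j}\), \(u_iu_{\bar j}\) and \(g\), which combined with Step 1 pins down \(A\).

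\emph{Step 3.} Rather than fully solving Step 2, I plan to use compactness directly. Tracing Step 1 gives
\[
\partial_m(\Delta u)=-nce^uu_m,
\quad\text{so}\quad
\Delta u+nce^u\ \text{is constant}.
\]
For \(c=0\), \(\Delta u\) is constant, hence zero after integration, so \(u\) is harmonic and therefore constant. For \(c\neq0\), I evaluate at the maximum and minimum of \(u\). There \(du=0\), so Step 1 gives \(\nabla A=0\) at those points, and the sign of \(\Delta u\) gives inequalities. That alone may not suffice, so I add an integral identity. Integrating \(e^{ku}\Delta u\) by parts for suitable \(k\), together with \(\Delta u=C-nce^u\), should force \(\int e^{ku}|\partial u|^2=0\).

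\emph{Main obstacle.} The hardest part is the \(c\neq0\) case in Step 3. The constraint \(\Delta u+nce^u=\mathrm{const}\) is a Liouville-type equation that a priori has nonconstant solutions, so the full Hessian equation from Step 1 must be used, not just its trace. I would first try contracting
\[
\nabla_{\bar m}\bigl(u_{i\bar j}\bigr)\bar{u}^{\,?}
\]
against \(u^m\) and integrating \(|\nabla A|^2\), to express \(\int e^{2u}|\partial u|^2\) in two ways with opposite signs. Failing that, I will use the curvature commutation of Step 2 to derive \(c\,e^u|\partial u|^2_g=0\) pointwise.
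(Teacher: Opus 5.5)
\textbf{There is a genuine gap in the case \(c>0\).}

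Your Step~1 matches the paper. Local symmetry comes from the compact Bochner--K\"ahler classification, and Lemma~\ref{lem:L-linear-algebra} gives \(\nabla_m u_{i\bar j}=-ce^u u_m\,g_{i\bar j}\). Your treatment of \(c=0\) is correct and shorter than the paper's argument via the parallel exact form \(\sqrt{-1}\,\partial\bar\partial u\): the trace shows \(\Delta u\) is constant, integration makes it zero, so \(u\) is harmonic and hence constant.

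The case \(c\neq0\) is where the proposal breaks down, and specifically \(c>0\), which is the case not already covered by Chen--Chen--Nie. Your Step~3 uses only the traced equation \(\Delta u+nce^u=C\).
\begin{itemize}
\item At the extrema of \(u\) it gives \(nce^{u_{\min}}\le C\le nce^{u_{\max}}\). This forces \(u\) constant when \(c<0\), but is perfectly consistent when \(c>0\).
\item The weighted identities \(\int_M e^{ku}\Delta u=-k\int_M e^{ku}|\partial u|^2\), combined with \(C=\frac{nc}{\mathrm{Vol}}\int_M e^u\), give relations whose two sides have the same sign for every \(k\) when \(c>0\), by Chebyshev's integral inequality. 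So they cannot force \(\partial u=0\).
\item The other routes you mention (the \(\bar k\)-derivative commutation, integrating \(|\nabla A|^2\)) are not carried out.
\end{itemize}
As written, the proposal does not prove the statement for \(c>0\).

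\textbf{The missing step is one line.} You already wrote down its ingredient in Step~2: the K\"ahler symmetry \(\nabla_m u_{i\bar j}=\nabla_i u_{m\bar j}\). Substituting your Step~1 formula into both sides gives
\[
ce^u\bigl(u_m\,g_{i\bar j}-u_i\,g_{m\bar j}\bigr)=0 .
\]
Contracting with \(g^{i\bar j}\) yields \((n-1)\,ce^u u_m=0\). Since \(n\ge2\) and \(c\neq0\), this gives \(\partial u=0\), so the real function \(u\) is constant and \(h=e^ug\) is K\"ahler. This is exactly how the paper concludes. When \(c\neq0\), no second derivatives, maximum principle or integration are needed. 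The Liouville-type obstacle you describe never arises, because the untraced Hessian equation is incompatible with a nonconstant \(u\) already at first order.
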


\begin{proof}
Let \(\theta\) be the Lee form. Since \(\theta\) is exact, write
\(\theta=du\) for some \(u\in C^\infty(M,\R)\). After adding a constant to
\(u\), the identities \(df=\pi^*\theta\) and
\(\pi^*h=e^f\widetilde h\) give
\begin{equation}\label{eq:GCK-descent}
f=\pi^*u,
\qquad
h_0=e^{-u}h,
\qquad
\widetilde h=\pi^*h_0.
\end{equation}
In particular, \(h_0\) is a K\"ahler metric on the compact manifold \(M\).

Since \(\pi^*h_0=\widetilde h\) and Bochner-flatness is a local
curvature condition, Corollary~\ref{cor:Bochner-flat-cover} implies that
\(h_0\) is Bochner--K\"ahler. The compact classification of Bochner--K\"ahler manifolds, due to Kamishima
and also recovered by Bryant, implies that every connected compact smooth
Bochner--K\"ahler manifold is a compact quotient of a symmetric
Bochner--K\"ahler model \(M_\kappa^p\times M_{-\kappa}^{n-p}\);
see \cite{Kamishima1994,Kamishima2005Correction} and
\cite[Corollary~4.17]{Bryant2001}. Thus the metric is locally symmetric.
Here the word ``smooth'' is important: in the orbifold category there are
additional compact Bochner--K\"ahler examples, such as weighted projective
spaces, which are not covered by the smooth classification; see
\cite[Remark~4.18 and Theorem~4.29]{Bryant2001}. In Bryant's proof, compactness forces the cohomogeneity \(m\) to be zero.
Thus the momentum mapping is constant and the metric is locally
homogeneous. Since scalar curvature is invariant under local isometries, it
is constant. Proposition~2.5 of \cite{Bryant2001} then implies that the
metric is locally symmetric, and hence
\begin{equation}\label{eq:nablaR-zero}
    \nabla R=0.
\end{equation}

By \eqref{eq:GCK-descent}, the curvature identity
\eqref{eq:main-curv-eq} descends to \(M\) as
\begin{equation}\label{eq:main-curv-eq-GCK}
R
=
\frac{\alpha}{2}G_{h_0}
+
\frac14L_{h_0}(A),
\qquad
\alpha=ce^u,
\qquad
A_{i\bar j}=\nabla_i\nabla_{\bar j}u.
\end{equation}
Differentiating in a holomorphic direction and using
\eqref{eq:nablaR-zero} together with \(\nabla h_0=0\), we obtain
\begin{equation}\label{eq:diff-main-eq}
L_{h_0}(\nabla_mA)=-2\alpha_mG_{h_0},
\qquad
\alpha_m=\nabla_m\alpha.
\end{equation}
Lemma~\ref{lem:L-linear-algebra} therefore yields
\begin{equation}\label{eq:nablaA-alpha}
\nabla_mA_{i\bar j}
=-\alpha_m(h_0)_{i\bar j}.
\end{equation}

Because the curvature of a K\"ahler connection is of type \((1,1)\), the
holomorphic covariant derivatives of \(\bar\partial u\) commute; see
\cite[Chapter~IX, \S4]{KobayashiNomizuII}. Hence
\(
\nabla_mA_{i\bar j}=\nabla_iA_{m\bar j}
\).  Combining this with \eqref{eq:nablaA-alpha}, we find
\begin{equation}\label{eq:alpha-rank-one}
\alpha_m(h_0)_{i\bar j}
=
\alpha_i(h_0)_{m\bar j}.
\end{equation}
At a given point, choose an \(h_0\)-unitary frame. For each fixed \(m\),
choose \(i\neq m\) and set \(j=i\) in
\eqref{eq:alpha-rank-one}. This gives \(\alpha_m=0\). Since \(n\geq2\),
we conclude that \(\partial\alpha=0\).

Suppose first that \(c\neq0\). Since \(\alpha=ce^u\), it follows that
\(\partial u=0\). The function \(u\) is real-valued, so it is constant.

It remains to consider \(c=0\). Then \(\alpha=0\), and
\eqref{eq:nablaA-alpha} gives \(\nabla^{1,0}A=0\). Since \(u\) is
real-valued, \(A\) is Hermitian; taking complex conjugates gives
\(\nabla^{0,1}A=0\), and hence \(\nabla A=0\). Consequently, the real
\((1,1)\)-form
\[
\eta=\sqrt{-1}\,\partial\bar\partial u
\]
is parallel. It is also exact: with
\(
d^cu=\frac{\sqrt{-1}}2(\bar\partial u-\partial u)
\), one has \(\eta=dd^cu\). Since \(\eta\) is parallel, it is coclosed. Indeed, for any local
\(h_0\)-orthonormal frame \(\{e_a\}\), the codifferential is given by
\[
        d^*\eta
        =
        -\sum_a e_a\lrcorner\,\nabla_{e_a}\eta ;
\]
hence \(\nabla\eta=0\) implies \(d^*\eta=0\). See, for instance,
\cite[Chapter~3]{Jost2017}. So compactness and
integration by parts give
\[
\int_M|\eta|_{h_0}^2\,dV_{h_0}
=
\int_M\langle dd^cu,\eta\rangle_{h_0}\,dV_{h_0}
=
\int_M\langle d^cu,d^*\eta\rangle_{h_0}\,dV_{h_0}
=0.
\]
Thus \(\eta=0\). Taking its \(h_0\)-trace shows that \(u\) is harmonic, so
\(u\) is constant on the compact connected manifold \(M\).

In both cases \(h=e^u h_0\) is a constant multiple of the K\"ahler metric
\(h_0\), and is therefore K\"ahler.
\end{proof}

\section{The strict LCK branch}

It remains to exclude the case in which the Lee form is not exact. We use
the following compact consequence of Kamishima's uniformization theorem
and Fried's classification of closed similarity manifolds.

\begin{proposition}[Kamishima--Fried uniformization]\label{prop:Kamishima}
Let \((M^n,h)\) be a connected compact LCK manifold of complex dimension
\(n\geq 2\). Suppose that the Lee form of \(h\) is not exact and that the
Tricerri--Vanhecke Bochner tensor of \(h\) vanishes. After replacing the
K\"ahler metric \(\widetilde h\) on the universal cover by a positive constant
multiple, there exists a holomorphic isometry
\[
        \Phi:(\widetilde M,\widetilde h)
        \longrightarrow
        \bigl(\mathbb C^n\setminus\{0\},h_{\mathrm{Euc}}\bigr),
\]
where \(h_{\mathrm{Euc}}\) is the standard flat K\"ahler metric. Under this
identification, every deck transformation is of the form
\[
        \gamma(z)=r_\gamma U_\gamma z,
        \qquad
        r_\gamma>0,\quad U_\gamma\in U(n).
\]
Moreover, \(r_\gamma\neq 1\) for at least one deck transformation \(\gamma\).
\end{proposition}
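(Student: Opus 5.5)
The plan is to assemble the statement from Kamishima's uniformization theorem for Bochner-flat LCK manifolds \cite[\S7]{Kamishima2006} and Fried's classification of closed similarity manifolds \cite{Fried1980}. The extra work is to check that the hypotheses match, and that the strictness of the Lee form produces a genuine similarity.

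\emph{Step 1: geometry of the cover.} By Corollary~\ref{cor:Bochner-flat-cover}, or directly from the vanishing of $B^{\mathrm{TV}}(h)$ together with conformal invariance, the universal K\"ahler cover $(\widetilde M,\widetilde h)$ is Bochner--K\"ahler. By \eqref{eq:deck-automorphy}, the deck group $\Gamma\cong\pi_1(M)$ acts by holomorphic homotheties with character $\lambda_\gamma$. Because the Lee form is not exact, this character is nontrivial, so some $\gamma$ has $\lambda_\gamma\neq1$; replacing $\gamma$ by $\gamma^{-1}$ if necessary, we may take $\lambda_\gamma<1$.

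\emph{Step 2: flatness.} A Bochner--K\"ahler metric whose curvature is not identically zero cannot admit a nonisometric homothety acting cocompactly in this way. The reason is that under a homothety of ratio $\lambda$ the norm $|R|_{\widetilde h}$ scales by $\lambda^{-2}$. The function $e^{f}|R|^2_{\widetilde h}$ is deck invariant, since it equals $|R^{\widetilde h}|^2$ measured with respect to $\pi^*h$ up to constants. It is therefore bounded on $\widetilde M$, because $M$ is compact. Iterating a strict homothety then forces $|R|_{\widetilde h}$ to vanish. I expect Kamishima's theorem to package exactly this argument, together with the observation that Bochner-flat plus homothetic forces flatness, or at least the splitting $M_\kappa^p\times M_{-\kappa}^{n-p}$ with $\kappa=0$. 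So I will cite \cite{Kamishima2006}: $\widetilde h$ is flat, and $M$ carries a closed similarity structure whose developing map $D:\widetilde M\to\C^n$ is a holomorphic local isometry, equivariant with respect to a holonomy homomorphism into the complex similarity group $\C^n\rtimes(\R_{>0}\times U(n))$.

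\emph{Step 3: Fried.} Fried's theorem says that a closed similarity manifold that is not a Euclidean space form (not all holonomy is isometric) has developing map a diffeomorphism onto $\R^{2n}\setminus\{p\}$, and the holonomy fixes $p$. Here the holonomy is non-isometric by Step 1, so this case applies. After translating so that $p=0$, $D$ becomes a biholomorphic isometry $\Phi$ onto $(\C^n\setminus\{0\},h_{\mathrm{Euc}})$. Each deck transformation corresponds to a similarity fixing $0$ with linear part in $\R_{>0}\times U(n)$, because it is holomorphic and conformal, so $\gamma(z)=r_\gamma U_\gamma z$. The constant rescaling of $\widetilde h$ enters only through normalizing the developing map. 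Finally $r_\gamma=\lambda_\gamma$, and by Step 1 this is not $1$ for some $\gamma$.

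The main obstacle is Step 2: making sure the cited uniformization applies in exactly this setting (compact, strict LCK, $B^{\mathrm{TV}}=0$), and that it yields flatness rather than some other Bochner--K\"ahler model. This includes the case $n=2$, where the Tricerri--Vanhecke tensor needs the corrected formula. If the citation is not clean, I would fall back on the scaling argument sketched in Step 2, combined with the local structure theory of Bochner--K\"ahler metrics in \cite{Bryant2001}, to conclude $R^{\widetilde h}\equiv0$.
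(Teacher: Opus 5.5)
Your Step~3 is sound, and in one respect tidier than the paper's proof. Once a developing map $D$ with $D^*h_{\Euc}=\widetilde h$ exists, equivariance gives $r_\gamma=\lambda_\gamma$. The nontrivial homothety character therefore already makes the holonomy non-isometric, so the closed similarity structure is incomplete and Fried's theorem applies without Kamishima's incompleteness corollary. The final assertion $r_\gamma\neq1$ then comes for free.

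The gap is Step~2, which carries all the content, and the scaling argument there fails. A homothety preserves the Levi-Civita connection, so the $(1,3)$-curvature of $\widetilde h$ is deck invariant. The deck-invariant scalar is therefore $|R^{\widetilde h}|_{\pi^*h}=e^{-f}|R^{\widetilde h}|_{\widetilde h}$, not $e^{f}|R|^2_{\widetilde h}$. Compactness does give $|R^{\widetilde h}|_{\widetilde h}\le Ce^{f}$. But along an orbit both sides scale by the same factor, $|R^{\widetilde h}|_{\widetilde h}\circ\gamma=\lambda_\gamma^{-2}|R^{\widetilde h}|_{\widetilde h}$ and $e^{f\circ\gamma}=\lambda_\gamma^{-2}e^{f}$, so iterating $\gamma$ only reproduces the same inequality.

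No argument using just a cocompact strict homothety can give flatness. Take a compact Sasakian manifold $S$ not of constant curvature one. The Vaisman manifold $S\times S^1$ has the non-flat K\"ahler cone over $S$ as its K\"ahler cover, and that cover carries such a homothety. So Bochner-flatness must enter, and it must enter globally. Your fallback to $M^p_\kappa\times M^{n-p}_{-\kappa}$ does not help either. That is the classification of \emph{compact} Bochner--K\"ahler manifolds, whereas $\widetilde h$ lives on the noncompact cover and does not descend to $M$.

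Deferring to Kamishima is the right move, but the citation has to be checked, and that check is exactly what the paper supplies and you leave as ``the main obstacle.'' It needs two inputs.

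\emph{Non-K\"ahlerianity.} \cite[Theorem~7.1]{Kamishima2006} concerns Bochner-flat LCK structures on complex manifolds that admit no K\"ahler metric at all. Non-exactness of the Lee form does not say this by itself. The paper obtains it from Vaisman's theorem \cite[Theorem~2.1]{Vaisman1980}: a compact LCK manifold is globally conformally K\"ahler if and only if it admits a K\"ahler metric.

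\emph{Excluding the second model.} Kamishima's theorem does not output flatness directly. It gives an equivariant holomorphic isometric immersion of $(\widetilde M,\widetilde h)$ into one of two models: the Euclidean similarity model $(i)'$ or a second model $(iii)'$. It is compactness of $M$, via \cite[Proposition~3.8]{Kamishima2006}, that excludes $(iii)'$ and leaves only the flat model.

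Without these two inputs you have neither flatness of $\widetilde h$ nor the developing map $D$ into $\C^n$ that your Step~3 starts from.
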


\begin{proof}
Let \(\theta\) be the Lee form of \(h\), and let
\(\pi:\widetilde M\to M\) be the universal covering. Choose
\(f\in C^\infty(\widetilde M,\mathbb R)\) such that \(df=\pi^*\theta\), and
write
\[
        \pi^*h=e^f\widetilde h .
\]
Then \(\widetilde h=e^{-f}\pi^*h\) is the canonical K\"ahler metric on the
universal cover. The function \(f\) is determined only up to an additive
constant; correspondingly, \(\widetilde h\) is determined only up to
multiplication by a positive constant.

Since the Tricerri--Vanhecke Bochner tensor is conformally invariant and
agrees with the ordinary Bochner tensor on K\"ahler metrics, the assumption
\(B^{\mathrm{TV}}(h)=0\) implies that \((\widetilde M,\widetilde h)\) is
Bochner--K\"ahler. Because \(\theta\) is not exact, the LCK metric \(h\) is
not globally conformally K\"ahler. By Vaisman's theorem, a compact LCK
manifold is globally conformally K\"ahler if and only if its underlying
complex manifold admits a K\"ahler metric; see \cite[Theorem~2.1]{Vaisman1980}.
Thus the underlying complex manifold \(M\) is non-K\"ahlerian in the sense
used by Kamishima.

Kamishima's uniformization theorem for non-K\"ahler Bochner-flat LCK
manifolds \cite[Theorem~7.1]{Kamishima2006} gives an equivariant
holomorphic isometric immersion of the canonical K\"ahler cover into one of
the two model geometries denoted by \((i)'\) and \((iii)'\) in
\cite{Kamishima2006}. Since \(M\) is compact, the model \((iii)'\) is
excluded by \cite[Proposition~3.8]{Kamishima2006}. Hence only the Euclidean
similarity model \((i)'\) can occur. After choosing the multiplicative
normalization of \(\widetilde h\), Kamishima's developing map satisfies
\[
        D^*h_{\mathrm{Euc}}=\widetilde h
\]
and is equivariant with respect to a holonomy homomorphism
\[
        \rho:\mathrm{Deck}(\pi)\longrightarrow
        \mathbb C^n\rtimes\bigl(U(n)\times\mathbb R^+\bigr).
\]
Here \(\mathrm{Deck}(\pi)\) denotes the deck transformation group of the covering
\(\pi:\widetilde M\to M\), namely the group of all diffeomorphisms
\(\gamma:\widetilde M\to\widetilde M\) satisfying \(\pi\circ\gamma=\pi\).

Thus
\[
        D\circ\gamma=\rho(\gamma)\circ D,
        \qquad
        \gamma\in\mathrm{Deck}(\pi).
\]
Writing \(\rho(\gamma)\) explicitly, we have
\[
        \rho(\gamma)(z)=a_\gamma+r_\gamma U_\gamma z,
        \qquad
        a_\gamma\in\mathbb C^n,\quad r_\gamma>0,\quad U_\gamma\in U(n).
\]

We now pass from the complex similarity structure to a real one. Identifying
\(\mathbb C^n\) with \(\mathbb R^{2n}\), every unitary transformation is
orthogonal for the underlying Euclidean metric. Hence
\[
        \mathbb C^n\rtimes\bigl(U(n)\times\mathbb R^+\bigr)
        \subset
        \mathbb R^{2n}\rtimes\bigl(O(2n)\times\mathbb R^+\bigr).
\]
Therefore \(D\), together with the above holonomy representation, defines a
real affine similarity structure on the closed manifold \(M\) in the sense of
Fried.

By \cite[Corollary~7.2]{Kamishima2006}, the canonical K\"ahler metric
\(\widetilde h\) is incomplete. Since \(D^*h_{\mathrm{Euc}}=\widetilde h\),
the induced real similarity structure is incomplete. Fried's theorem then
applies. By \cite[Theorem~1]{Fried1980}, an incomplete closed similarity
manifold is radiant; that is, its similarity holonomy fixes a common point
\(p\in\mathbb R^{2n}\). Moreover, since \(2n\geq 4\), the higher-dimensional
case of \cite[Theorem~2]{Fried1980} implies that the developing map is a
covering map
\[
        D:\widetilde M\longrightarrow \mathbb R^{2n}\setminus\{p\}.
\]
Viewing \(\mathbb R^{2n}\) as \(\mathbb C^n\), we translate the common fixed
point to the origin. This translation is holomorphic and isometric, so we may
assume \(p=0\). Since \(n\geq 2\), the punctured space
\(\mathbb C^n\setminus\{0\}\) is simply connected. Hence the covering map
\(D\) is a diffeomorphism
\[
        D:\widetilde M\longrightarrow \mathbb C^n\setminus\{0\}.
\]
Because \(D\) is holomorphic and satisfies \(D^*h_{\mathrm{Euc}}=\widetilde h\),
it is a holomorphic isometry. This proves the first assertion.

It remains to describe the deck action in this identification. Since the
holonomy fixes the origin, the affine translation part vanishes:
\(a_\gamma=0\). Thus
\[
        \rho(\gamma)(z)=r_\gamma U_\gamma z.
\]
Using the equivariance \(D\circ\gamma=\rho(\gamma)\circ D\) and identifying
\(\widetilde M\) with \(\mathbb C^n\setminus\{0\}\) by \(D\), every deck
transformation has the form
\[
        \gamma(z)=r_\gamma U_\gamma z,
        \qquad
        r_\gamma>0,\quad U_\gamma\in U(n).
\]

Finally, suppose that \(r_\gamma=1\) for every deck transformation. Then all
deck transformations act isometrically on \((\widetilde M,\widetilde h)\), so
\(\widetilde h\) descends to a K\"ahler metric \(h_0\) on \(M\). Equivalently,
from the deck invariance of \(\pi^*h=e^f\widetilde h\), we get
\[
        e^{f\circ\gamma}\widetilde h
        =
        \gamma^*(e^f\widetilde h)
        =
        e^f\widetilde h,
\]
and hence \(f\circ\gamma=f\) for every \(\gamma\in\mathrm{Deck}(\pi)\). Thus \(f\)
descends to a smooth function \(u\) on \(M\), and
\[
        h=e^u h_0 .
\]
Therefore the Lee form is \(du\), contradicting the assumption that it is not
exact. Hence \(r_\gamma\neq 1\) for at least one deck transformation.
\end{proof}

\begin{proposition}\label{prop:strict-exclusion}
Let \((M^n,h)\) be a connected compact LCK manifold of complex dimension
\(n\geq2\). If \(H_h^C\equiv c\), then the Lee form of \(h\) is exact.
\end{proposition}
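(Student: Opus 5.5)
We argue by contradiction. Suppose the Lee form \(\theta\) of \(h\) is not exact. By Corollary~\ref{cor:Bochner-flat-cover}, \(h\) is Bochner-flat in the sense of Tricerri--Vanhecke. Proposition~\ref{prop:Kamishima} therefore applies. After rescaling \(\widetilde h\), which only shifts \(f\) by a constant, we may identify \((\widetilde M,\widetilde h)\) holomorphically and isometrically with \((\C^n\setminus\{0\},h_{\Euc})\). Every deck transformation then has the form \(\gamma(z)=r_\gamma U_\gamma z\), and some \(\gamma\) has \(r_\gamma\neq1\). Since \(\gamma^*h_{\Euc}=r_\gamma^2h_{\Euc}\), the automorphy relation \eqref{eq:deck-automorphy} reads
\[
f(r_\gamma U_\gamma z)=f(z)-2\log r_\gamma .
\]

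Next we insert flatness into Proposition~\ref{prop:curvature-equation}. Since \(R^{\widetilde h}=0\), identity \eqref{eq:main-curv-eq} becomes \(L_{\widetilde h}(A)=-2\alpha\,G_{\widetilde h}\) pointwise. Lemma~\ref{lem:L-linear-algebra} then gives, in the standard coordinates of \(\C^n\setminus\{0\}\),
\[
f_{i\bar j}=-c\,e^f\delta_{ij}.
\]

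\emph{Case \(c\neq0\).} Differentiate this equation in \(z^m\) to get \(f_{i\bar j m}=-c\,e^f f_m\delta_{ij}\). The left side is symmetric in \(i\) and \(m\), since partial derivatives commute in flat coordinates. Hence \(f_m\delta_{ij}=f_i\delta_{mj}\). Because \(n\geq2\), for each \(m\) we may choose \(i=j\neq m\), which gives \(f_m=0\). Thus \(\partial f=0\), and since \(f\) is real, \(f\) is constant. Then \(f_{i\bar j}=0\), which contradicts \(c\,e^f\neq0\). Alternatively, constancy of \(f\) directly contradicts \(r_\gamma\neq1\) through the automorphy relation.

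\emph{Case \(c=0\).} Now \(f\) is pluriharmonic on \(\C^n\setminus\{0\}\), and this domain is simply connected because \(n\geq2\). Hence \(f=\operatorname{Re}F\) for a holomorphic function \(F\). By Hartogs' extension theorem (\(n\geq2\)), \(F\), and hence \(f\), extends continuously, indeed smoothly, across the origin. Choose \(\gamma\) with \(r_\gamma\neq1\); replacing \(\gamma\) by \(\gamma^{-1}\) if necessary, we may take \(r_\gamma<1\). Iterating the automorphy relation gives
\[
f(\gamma^kz)=f(z)-2k\log r_\gamma\to+\infty .
\]
On the other hand \(\gamma^kz=r_\gamma^kU_\gamma^kz\to0\), so \(f(\gamma^kz)\to f(0)\), which is finite. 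This is a contradiction.

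In both cases the assumption that \(\theta\) is not exact is untenable, so the Lee form is exact. The only genuinely nontrivial inputs are the uniformization (Proposition~\ref{prop:Kamishima}) and, for \(c=0\), the Hartogs step. The latter is the main point to check: we need both that a pluriharmonic function on the simply connected domain \(\C^n\setminus\{0\}\) is the real part of a holomorphic function, and that this function extends across the puncture when \(n\geq2\).
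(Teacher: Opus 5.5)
Your proof is correct and follows the paper's argument: uniformization via Proposition~\ref{prop:Kamishima}, the flat Hessian equation $f_{i\bar j}=-ce^f\delta_{ij}$, the commuting-derivatives argument for $c\neq0$, and Hartogs extension of $F$ with $f=\operatorname{Re}F$ for $c=0$. The differences are cosmetic: for $c\neq0$ you also get a contradiction directly from $f_{i\bar j}=0\neq -ce^f$, without using the automorphy relation. For $c=0$ you use continuity of the extended $f$ along the orbit $\gamma^k z\to0$, whereas the paper evaluates the constant cocycle $F(rUz)-F(z)=a$ at $z=0$.
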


\begin{proof}
Assume, to the contrary, that the Lee form is not exact. By
Corollary~\ref{cor:Bochner-flat-cover}, \(h\) is Bochner-flat in the sense of
Tricerri--Vanhecke. Proposition~\ref{prop:Kamishima} therefore identifies the
universal K\"ahler cover with
\[
\widetilde M=\C^n\setminus\{0\},
\qquad
\widetilde h=h_{\Euc},
\qquad
\pi^*h=e^fh_{\Euc}.
\]
Since \(h_{\Euc}\) is flat, \eqref{eq:main-curv-eq} reduces to
\[
0
=
\frac{ce^f}{2}G_{h_{\Euc}}
+
\frac14L_{h_{\Euc}}(A),
\qquad
A_{i\bar j}=f_{i\bar j}.
\]
Lemma~\ref{lem:L-linear-algebra} gives
\begin{equation}\label{eq:flat-Hessian}
f_{i\bar j}=-ce^f\delta_{ij}.
\end{equation}

Choose a deck transformation \(\gamma(z)=rUz\) with \(r\neq1\). Replacing
\(\gamma\) by its inverse if necessary, assume \(0<r<1\). Since
\(e^fh_{\Euc}=\pi^*h\) is deck invariant and
\(\gamma^*h_{\Euc}=r^2h_{\Euc}\), one has
\begin{equation}\label{eq:f-automorphy-flat}
f(rUz)=f(z)-2\log r.
\end{equation}

Suppose first that \(c\neq0\). Differentiating
\eqref{eq:flat-Hessian} in the \(z^k\)-direction gives
\(
f_{i\bar j k}=-ce^ff_k\delta_{ij}
\). Since ordinary derivatives commute, comparison with the same formula
with \(i\) and \(k\) interchanged yields
\[
f_k\delta_{ij}=f_i\delta_{kj}.
\]
For each \(k\), choose \(i\neq k\) and set \(j=i\). Then \(f_k=0\), so
\(\partial f=0\). Because \(f\) is real-valued and
\(\C^n\setminus\{0\}\) is connected, \(f\) is constant. This contradicts
\eqref{eq:f-automorphy-flat}.

Suppose now that \(c=0\). Equation \eqref{eq:flat-Hessian} gives
\(\partial\bar\partial f=0\), so \(f\) is real pluriharmonic. Since
\(\C^n\setminus\{0\}\) is simply connected, there is a holomorphic function
\(F\) on \(\C^n\setminus\{0\}\) such that \(f=\operatorname{Re}F\).
Hartogs' extension theorem extends \(F\) across the origin to an entire
function on \(\C^n\); see \cite[Theorem~2.3.2]{Hormander1990}.

By \eqref{eq:f-automorphy-flat},
\[
        \operatorname{Re}\bigl(F(rUz)-F(z)\bigr)
        =
        -2\log r
\]
on \(\mathbb C^n\setminus\{0\}\). Hence the holomorphic function
\(F(rUz)-F(z)\) has constant real part, and is therefore constant. Thus
there exists \(a\in\mathbb C\) such that
\begin{equation}\label{eq:F-cocycle}
        F(rUz)-F(z)=a,
        \qquad
        \operatorname{Re}a=-2\log r .
\end{equation}
After Hartogs' extension, \(F\) is entire on \(\mathbb C^n\). Therefore both
sides of \eqref{eq:F-cocycle} are entire functions of \(z\), and the identity
extends across the origin. Setting \(z=0\) in \eqref{eq:F-cocycle} gives
\[
        a=F(0)-F(0)=0 .
\]
This contradicts \(\operatorname{Re}a=-2\log r\neq0\), because \(0<r<1\).

Both cases are impossible. Therefore the Lee form is exact.
\end{proof}

\section{Proof of the main theorem}

\begin{proof}[Proof of Theorem~\ref{thm:main}]
Let \((M^n,h)\) be as in the statement. By
Proposition~\ref{prop:strict-exclusion}, the Lee form is exact, so \(h\) is
globally conformally K\"ahler. Proposition~\ref{prop:GCK-branch} then shows
that \(h\) is K\"ahler.

The Chern and Levi-Civita connections of \(h\) now coincide. Since an
algebraic K\"ahler curvature tensor is determined by its holomorphic
sectional curvatures, the K\"ahler polarization identity gives
\[
R^h_{i\bar j k\bar\ell}
=
\frac{c}{2}
\bigl(
 h_{i\bar j}h_{k\bar\ell}
 +h_{i\bar\ell}h_{k\bar j}
\bigr).
\]
See \cite[Chapter~IX, \S7, Proposition~7.6]
{KobayashiNomizuII}. Thus \(h\) has constant holomorphic sectional curvature
\(c\). Since \(M\) is compact, \(h\) is complete, and its simply connected
complete model is, after the appropriate normalization, complex projective
space, complex Euclidean space, or complex hyperbolic space according as
\(c>0\), \(c=0\), or \(c<0\). In particular, if \(c=0\), then
\(R^h\equiv0\), so \(h\) is K\"ahler flat.
\end{proof}

\bibliographystyle{amsalpha}
\bibliography{HSC_revised}

\end{document}